\documentclass[12pt]{article}
\usepackage{amsthm,amsmath,amssymb}

\usepackage[active]{srcltx}
\sloppy

\newtheorem{theorem}{Theorem}
\newtheorem{lemma}[theorem]{Lemma}

\theoremstyle{remark}
\newtheorem{remark}[theorem]{Remark}
\newtheorem{example}[theorem]{Example}

\renewcommand{\le}{\leqslant}
\renewcommand{\ge}{\geqslant}

\newcommand{\F}{\mathbb F}
\DeclareMathOperator{\Hom}{Hom}
\DeclareMathOperator{\Ker}{Ker}
\DeclareMathOperator{\im}{Im}

\DeclareMathOperator{\dddots}%
{\!\text{\raisebox{2pt}{$\cdot$}}\cdot
\text{\raisebox{-2pt}{$\cdot$}}}

\begin{document}

\title{Neighborhood radius estimation for Arnold's miniversal deformations of complex and $p$-adic matrices\thanks{Published in Linear Algebra Appl. 512 (2017) 97--112.}}

\author{Victor A. Bovdi\thanks{United Arab Emirates University, Al Ain, UAE, {vbovdi@gmail.com}}
\and
Mohammed A. Salim\thanks{United Arab Emirates University, Al Ain, UAE, {msalim@uaeu.ac.ae}}
\and
Vladimir V. Sergeichuk\thanks{Institute of Mathematics,
Kiev, Ukraine, {sergeich@imath.kiev.ua}}}

\date{}
\maketitle

\begin{abstract}
V.I. Arnold (1971) constructed a
simple normal form to which all complex
matrices $B$ in a neighborhood $U$ of a given square matrix $A$ can be
reduced by similarity transformations
that smoothly depend on the entries of $B$. We calculate the radius of the neighborhood $U$. A.A. Mailybaev  (1999, 2001) constructed a reducing similarity transformation in the form of Taylor series; we construct this transformation by another method.
We extend Arnold's normal form to matrices
over the field $\mathbb Q_p$ of $p$-adic numbers and the field $\F((T))$ of Laurent series over a field $\F$.

{\it Keywords}
Miniversal deformations; reducing transformations; matrices over $p$-adic numbers.

{\it AMS classification:}
15A21; 15B33; 37J40
\end{abstract}

\section{Introduction}

The reduction of a complex matrix to
its Jordan form is an unstable
operation: both the Jordan form and a
reduction transformation depend
discontinuously on the entries of the
original matrix. Arnold \cite{arn} (see
also \cite{arn2,arn3}) constructed a
\emph{miniversal deformation} of a
square complex matrix $A$; i.e., a
simple normal form $B_{\text{arn}}$ to which all complex
matrices $B$ close to $A$ can be
reduced by similarity transformations
that smoothly depend on the entries of
$B$.

More precisely: Arnold supposes without restriction that $A$ is a Jordan canonical matrix and reduces all matrices $B$ in a neighborhood $U$ of $A$ to the form $B_{\text{arn}}$ by a smooth similarity transformation that acts identically on $A$.
Klimenko and Sergeichuk \cite{k-s} described this reduction in detail.

Many applications of Arnold's normal form in different areas of mathematics are given in more than 120 articles that cite \cite{arn} from the Mathematical Reviews  Citation Database.
We mention only Mailybaev's articles \cite{mai1999,mai2000,mai2001} in which applications of miniversal deformations are based on the fact that the spectrum of $B\in U$ and $B_{\text{arn}}$ coincide but $B_{\text{arn}}$ has a simple form. Mailybaev also constructed a smooth similarity transformation (in the form of Taylor series) that transforms all $B\in U$ to $B_{\text{\rm arn}}$.

Galin \cite{gal} (see also
\cite[\S\,30E]{arn3}) obtained miniversal deformations of real matrices by realification of Arnold's miniversal deformations of complex matrices. Simpler miniversal deformations of real matrices were given by Garcia-Planas and
Sergeichuk \cite{gar-ser}.

The main results of our paper are formulated in Theorem \ref{teo2}:
\begin{itemize}
\item
{\it We extend Arnold's normal form of complex matrices to matrices
over any field that is complete with
respect to a nontrivial absolute value} (in particular, over {\it the field $\mathbb Q_p$ of $p$-adic numbers} and {\it the field $\F((T))$ of Laurent series
over a field $\F$}; see Example \ref{mft}).
We use the Frobenius canonical form for similarity over an arbitrary field instead of the Jordan canonical form.

  \item    {\it Over such a field, we construct a smooth similarity transformation that transforms all $B\in U$ to $B_{\text{\rm arn}}$.} Our method
  differs from the method developed by Mailybaev \cite{mai1999,mai2000,mai2001}.

  \item {\it We give the neighborhood $U$ in an explicit form}, which is important for applications. As far as we know, the estimate of the radius of $U$ was unknown even for complex matrices.

\end{itemize}

Miniversal deformations, reducing transformations, and neighborhood radius estimations for complex matrices under congruence and *congruence were given by Dmytryshyn, Futorny, and Sergeichuk \cite{f_s,def-sesq}.

\section{Preliminaries}
In this section, we recall some definitions and known facts.

\subsection{Arnold's miniversal
deformations of Jordan matrices}\label{sss1}

The similarity class of an $n\times n$ complex $A$ in a small
neighborhood of $A$ can be obtained by
a very small deformation of the affine
matrix space \[\{A+ XA-
AX\,|\,X\in{\mathbb C}^{n\times n}\}\]
since for
each sufficiently small matrix
$X\in{\mathbb C}^{n\times n}$,
\begin{equation}\label{kkd}
\begin{split}
(I_n-&X)^{-1}A(I_n-X)=(I_n+X+X^2+\cdots)A(I_n-X)
\\&=A+(XA-AX)+X(XA-AX)+X^2(XA-AX)+\cdots
\\&=A+(XA-AX)+X(I_n+X+X^2+\cdots)(XA-AX)
\\&=A+\underbrace{(XA-AX)}_{\text{small}}
+\underbrace{X(I_n-X)^{-1}(XA
-AX)}_{\text{very small}}.
\end{split}
\end{equation}
($X$ can be taken small due to the Lipschitz property \cite{rodm}: if $A$ and $B$ are $n\times n$ complex
matrices close to each other and
$B=S^{-1}AS$ with a nonsingular $S$,
then $S$ can be taken near $I_n$.)
The vector space
\begin{equation}\label{eelie}
T(A):=\{XA-AX\,|\,X\in{\mathbb
C}^{n\times n}\}
\end{equation}
is the \emph{tangent space} to the similarity
class of $A$ at the point $A$.
The numbers
\[\dim T(A),\qquad
n^2-\dim T(A)\] are called
the \emph{dimension} and
\emph{codimension}, respectively, of the similarity
class of $A$.

We use the \emph{matrix norm}
$ \|M\|:=\sqrt{\vphantom{\sum^a} \sum |m_{ij}|^2}$ of $M=[m_{ij}]$, the Jordan blocks
\begin{equation}\label{jyc}
J_m(\lambda ):=\begin{bmatrix}
\lambda&1&&0  \\&\lambda& \ddots\\&&\ddots&1\\
0&&&\lambda
               \end{bmatrix}\qquad
               (m\text{-by-}m,\ \lambda\in
               \mathbb C),
\end{equation}
and the matrices
\begin{equation*}\label{5.1}
0^{\downarrow}:=\left[\begin{tabular}{c}
\Large 0 \\[-1mm]
$\!\!* \cdots *\!\!$
 \end{tabular}\right],
\qquad
0^{\leftarrow}:=\left[
\begin{tabular}{cc}
$*$& \\[-2.5mm]
$\vdots$&\!\!\!\Large 0\\[-2mm]
$*$&
\end{tabular}
\right]
\end{equation*}
in which all entries are zeros except
the last row of $0^{\downarrow}$ and
the first column of $0^{\leftarrow}$ that consist of stars.

The following theorem was proved by V.I. Arnold.

\begin{theorem}[{\cite[Theorem 4.4]{arn}}]\label{bhj}
Let
\begin{equation}\label{kus}
J:=\bigoplus_{i=1}^t \left(J_{m_{i1}}(\lambda _i)\oplus
J_{m_{i2}}(\lambda _i)\oplus\dots\oplus
J_{m_{ik_i}}(\lambda _i)\right)
\end{equation}
be a Jordan matrix of size $n\times n$, in which
\begin{equation}\label{kfu}
m_{i1}\ge
m_{i2}\ge\dots\ge m_{ik_i}\quad\text{for all }i
\end{equation}
and $\lambda _1,\dots,\lambda _t$ are distinct complex numbers. Then

\begin{itemize}
  \item[\rm{(a)}] {all complex matrices $J+X$
that are sufficiently close to $J$
can be simultaneously reduced by some
transformation
\begin{equation}\label{tef}
J+X\mapsto {\mathcal
S}(X)^{-1} (J+X) {\mathcal
S}(X),\quad\begin{matrix}
\text{${\mathcal S}(X)$
is nonsingular and}\\
\text{analytic at zero, } {\mathcal
S}(0)=I_n
\end{matrix}
\end{equation}
to the form
\begin{equation}\label{gep}
J+{\mathcal D}:=\bigoplus_{i=1}^t
\begin{bmatrix}
J_{n_{i1}}(\lambda _i)+0^{\downarrow}
&0^{\downarrow}&\dots&0^{\downarrow}
                \\
0^{\leftarrow}&
J_{n_{i2}}(\lambda _i)+0^{\downarrow}
&\dddots&\vdots
                \\
\vdots&\dddots&\dddots&0^{\downarrow}
                \\
0^{\leftarrow}&\dots&0^{\leftarrow}
&J_{n_{ik_i}}(\lambda _i)+0^{\downarrow}
\end{bmatrix}
\end{equation}
in which the stars of $\mathcal D$ represent elements
that depend analytically on the
entries of $X$;}

  \item[\rm{(b)}] {the number of stars in
${\mathcal D}$ is minimal that can be
achieved by transformations of the form
\eqref{tef}; this number of stars is equal to
the codimension of the similarity class
of the matrix $J$.}
\end{itemize}
\end{theorem}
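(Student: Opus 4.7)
The plan is to realize Arnold's normal form as a transversal slice to the similarity orbit of $J$. Let $N\subset\mathbb{C}^{n\times n}$ denote the linear subspace of matrices having the sparse star pattern of $\mathcal{D}$ in \eqref{gep}. The heart of the argument is the internal decomposition $\mathbb{C}^{n\times n}=T(J)\oplus N$; granted this, existence of the reducing transformation in (a) will follow from the inverse function theorem, and (b) will be a trivial dimension count.

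I would first establish the decomposition. The dimension of the centralizer $C(J):=\{S\in\mathbb{C}^{n\times n}:SJ=JS\}$ is given by the classical Frobenius formula $\dim C(J)=\sum_i\sum_{j,k}\min(m_{ij},m_{ik})$, and one verifies by a direct count that the number of stars prescribed in \eqref{gep} equals this number, so $\dim N=n^2-\dim T(J)$. It then suffices to show $T(J)\cap N=0$: if $XJ-JX\in N$, a block-wise analysis of the commutator equation (first splitting by generalized eigenvalue $\lambda_i$, then working inside each pair of Jordan blocks) forces $X\in C(J)$ and hence $XJ-JX=0$. This combinatorial/linear-algebraic verification is where Arnold's specific choice of star pattern is essential, and it is in my view the main obstacle of the proof — the implicit function theorem step that follows is nearly automatic once this transversality is in hand.

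Granted the decomposition, fix a linear complement $Y$ of $C(J)$ in $\mathbb{C}^{n\times n}$ and consider the analytic map
\[
F\colon Y\oplus N\longrightarrow\mathbb{C}^{n\times n},\qquad
F(Z,\mathcal{D}):=(I_n-Z)^{-1}(J+\mathcal{D})(I_n-Z)-J.
\]
The computation in \eqref{kkd} shows that the differential of $F$ at $(0,0)$ is the linear map $(Z,\mathcal{D})\mapsto(ZJ-JZ)+\mathcal{D}$, which by the decomposition and the choice of $Y$ is an isomorphism onto $\mathbb{C}^{n\times n}$. Hence $F$ is an analytic local diffeomorphism at the origin, and inverting it yields analytic functions $Z(X)\in Y$ and $\mathcal{D}(X)\in N$ satisfying $(I_n-Z(X))^{-1}(J+X)(I_n-Z(X))=J+\mathcal{D}(X)$ for all sufficiently small $X$. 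Setting $\mathcal{S}(X):=I_n-Z(X)$ delivers the transformation \eqref{tef}. For (b), any analytic transversal to the orbit at $J$ must have dimension at least $n^2-\dim T(J)=\dim C(J)=\dim N$, so the number of stars in \eqref{gep} is indeed minimal and equal to the codimension.
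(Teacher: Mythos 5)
Your proposal is correct in essence, but it takes a genuinely different route from the one the paper follows, so a comparison is in order. The paper does not re-prove Theorem \ref{bhj} directly: it cites Arnold for the key decomposition \eqref{jyr} in the complex Jordan case (this is item (c)(ii) of Theorem \ref{teo2}), and then proves a more general statement, Theorem \ref{teo2}, which yields Theorem \ref{bhj}(a) as a corollary. For the existence and analyticity of $\mathcal S(X)$ your route is the classical one: pass to the map $F(Z,\mathcal D)=(I-Z)^{-1}(J+\mathcal D)(I-Z)-J$, check that its differential at the origin is $(Z,\mathcal D)\mapsto (ZJ-JZ)+\mathcal D$, and invoke the inverse function theorem. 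This is short and clean, but it is a pure existence statement. The paper instead builds $\mathcal S(X)$ by hand as an infinite product $\prod_{k\ge 1}(I-C_k)$ coming from a Newton--type iteration \eqref{dei}, with explicit convergence estimates (Lemma \ref{lib}). That extra work buys two things that the inverse function theorem cannot give: an explicit radius for the neighborhood $U$ in \eqref{kuxd}, which is one of the declared goals of the paper, and a proof that works verbatim over any complete valued field (e.g.\ $\mathbb Q_p$, $\F((T))$), where citing the complex-analytic inverse function theorem is not available. Your argument, by contrast, is intrinsically tied to $\F=\mathbb C$ (or $\mathbb R$) and produces no quantitative control on $U$.

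Two smaller points. First, there is a sign/direction slip: from $F(Z(X),\mathcal D(X))=X$ you get $(I-Z)^{-1}(J+\mathcal D)(I-Z)=J+X$, i.e.\ $(I-Z)(J+X)(I-Z)^{-1}=J+\mathcal D$; hence the reducing transformation in \eqref{tef} is $\mathcal S(X)=(I_n-Z(X))^{-1}$, not $I_n-Z(X)$ (or, equivalently, define $F$ with the conjugating factors swapped). This does not affect analyticity or $\mathcal S(0)=I_n$. Second, your treatment of the transversality $T(J)\cap N=0$ is a sketch, deferring to ``a block-wise analysis of the commutator equation''; you are right that this is where all the combinatorics lives. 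Worth noting: the paper's Lemma \ref{lsb} handles the analogous step for Frobenius blocks by identifying $\Ker\xi$ with $\Hom_{\F[x]}(M(\Psi),M(\Phi))$, which reduces the count to the cyclicity of $M(\Psi)$; this is a cleaner way to get $\dim T(\Phi,\Psi)$ than a bare-hands block computation, although the paper still relies on Arnold's original computation for the specific Jordan star pattern of (c)(ii), which is what your Theorem \ref{bhj} actually needs. Your minimality argument in (b) (any slice transverse to the orbit has dimension at least the codimension) is correct and matches the standard reasoning.
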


\begin{example}\label{mwt}
If $J=J_3(5)\oplus J_2(5)$, then \eqref{gep} takes the form
\[
J+{\mathcal D}=\left[
\begin{array}{ccc|cc}
5 &1&0&0&0 \\
0&5 &1&0&0\\
0& 0&5&0& 0\\\hline
0 &0&0&5&1 \\
0&0&0&0& 5
\end{array}
\right]+
\left[
\begin{array}{ccc|cc}
0&0&0&0&0 \\
0&0&0&0&0\\
*& * &*&*& *\\\hline
* &0&0&0&0 \\
*&0&0&*&*
\end{array}
\right].
\]
\end{example}

\begin{remark}
Garcia-Planas and Sergeichuk
\cite{gar-ser} proved that the
statements (a) and (b) are also true for matrices
over $\mathbb R$ if the real Jordan
canonical form is taken instead of
\eqref{kus}, which simplifies the
miniversal deformations of real
matrices constructed by Galin
\cite{gal} (see also \cite[\S\,30E]{arn3}).
\end{remark}

\begin{remark}
Belitskii \cite{bel} proved that for each Jordan matrix $J$ there exists a permutation of rows and the same permutation of columns such that the obtained matrix $J'=S^{-1}JS$ ($S$ is a permutation matrix) possesses the property: all commuting with $J'$ matrices are upper block triangular. Klimenko and Sergeichuk \cite{K-S}  showed that the same permutations of rows and columns of \eqref{gep} transforms it to $J'+\mathcal D'=S^{-1}(J+\mathcal D)S$, which is lower block triangular. The matrix $J'$ was called the \emph{Weyr canonical matrix} by Sergeichuk \cite{ser}.
\end{remark}

\begin{remark}
Let $\mathcal D$ be the matrix in \eqref{gep}.
Denote by ${\mathcal D}({\mathbb C})$ the
vector space of all matrices obtained
from $\mathcal D$ by replacing its stars by
complex numbers.
Arnold \cite{arn} states that this vector space is a direct complement of the tangent space $T(J)$; that is,
\begin{equation}\label{jyr}
{\mathbb C}^{n\times
n}=T(J) \oplus {\mathcal
D}({\mathbb C}).
\end{equation}
(Thus, the number of
stars in $\mathcal D$ is equal to the
codimension of the similarity class of
$J$.) Moreover,
\emph{if ${\mathcal D}$ is any matrix
consisting of 0's and $*$'s that
satisfies \eqref{jyr}, then ${\mathcal D}$ can be used in
\eqref{gep}}.
This result about the similarity action of the group of nonsingular complex matrices was
generalized by Tannenbaum \cite[Part V,
Theorem 1.2]{tan} to a Lie group acting
on a complex manifold. Simplest
miniversal deformations of matrix
pencils and contagredient matrix
pencils \cite{kag,gar-ser}, and
matrices under congruence and
*congruence \cite{f_s,def-sesq} were
constructed by methods that are based
on direct sum decompositions analogous
to \eqref{jyr}.
\end{remark}

\subsection{An absolute value}

We extend Theorem \ref{bhj} to matrices over a field
$\F$ with topology given by an
\emph{absolute value}, which is a real
valued function $|.|:\mathbb F\to
\mathbb R$ such that
\begin{itemize}
  \item $|x|\ge 0$,
      $|x|=0\Leftrightarrow x=0$,
  \item $|xy|=|x||y|$, and
      $|x+y|\le |x|+|y|$
\end{itemize}
for all $x,y\in\mathbb F$; see
\cite[Chapter XII]{lang}. Then
$|1|=1$, $|-x|=|x|$, and $|x^{-1}|=|x|^{
-1}$.

A sequence $a_1,a_2,\dots$ in $\F$
\emph{converges} to $a \in \F$ if for
every $\varepsilon > 0$ there exists a
natural number $N$ such that $|a_n - a|
< \varepsilon $ for all $n\ge N$. A
sequence $a_1,a_2,\dots$  in $\F$ is a
\emph{Cauchy sequence} if for every
$\varepsilon> 0$, there exists $N$ such
that $|a_n -a_m | < \varepsilon $ for
all $m,n>N$. Every convergent sequence
is Cauchy. If every Cauchy sequence
converges, then $\F$ is called
\emph{complete}.

Each field $\F$ possesses the trivial
absolute value defined by $|0|=0$ and
$|x|=1$ for all $0\ne x\in \F$. By
\cite[Proposition 2.1]{lang}, each
field with nontrivial absolute value is
a dense subfield of a complete field,
which is unique up to $\F$-isomorphism.

\begin{example}\label{mft}
The most known complete fields with
nontrivial absolute value are
\begin{itemize}
\item the fields $\mathbb R$ and
    $\mathbb C$ with the usual
    absolute value,

  \item the field $\mathbb Q_p$ of $p$-adic
      numbers for any prime $p$ with
      the absolute value
\begin{equation}\label{kud}
|x|_p:=p^{-z},\qquad x=a_zp^z+a_{z+1}p^{z+1}+
\cdots\in\mathbb Q_p
\end{equation}
in which $z\in\mathbb Z$,
$a_{z},a_{z+1},\ldots\in\{0,1,\dots,p-1\}$
and $a_z\ne 0$,

\item the field $\F((T))$ of Laurent series
over a field $\F$
    with the absolute value
\begin{equation}\label{kye}
|x|:=2^{-z},\qquad x=a_zT^z+a_{z+1}T^{z+1}
+a_{z+2}T^{z+2}+
\cdots
\end{equation}
in which  $z\in\mathbb Z$,
$a_z,a_{z+1},a_{z+2},\ldots\in\mathbb
F$ and $a_z\ne 0$ (see \cite[p.
316]{cop}).
\end{itemize}

\end{example}

An absolute value is
\emph{non-Archimedean} if $|x+y|\le
\max\{|x|,|y|\}$ for all $x,y\in\mathbb
F$ (then $|x+y|= \max\{|x|,|y|\}$ if
$|x|\ne |y|$). Its geometric properties
are unaccustomed: every triangle is
isosceles (since
$|x-z|=|(x-y)+(y-z)|=\max\{|x-y|,|y-z|\}$
if $|x-y|\ne|y-z|$); every point of the
open sphere S$_{\varepsilon
}(x):=\{y\in\F|\,|y-x|<\varepsilon \}$
may serve as a center; any two spheres
are either disjoint or one is contained
inside the other; see \cite{nat}. The
absolute values \eqref{kud} and
\eqref{kye} are non-Archimedean.

\subsection{Real Jordan and Frobenius canonical matrices}

Define the $m\times m$ matrix
\[
C_m(a,b):=  \begin{bmatrix}
C(a,b)&I_2&&0  \\&C(a,b)& \ddots\\&&\ddots&I_2\\
0&&&C(a,b)
\end{bmatrix}\ \text{ with }
C(a,b):=\begin{bmatrix}
a&b\\-b&a\end{bmatrix}
\]
for each $a,b\in\mathbb R$ and $m=2,4,6,\dots.$
We use the \emph{real Jordan canonical form} (see \cite[Theorem 3.4.1.5]{HJ13}): each square real matrix is similar over $\mathbb R$ to a direct sum
\begin{equation}\label{ffg}
\begin{split}
C:=&\left[\bigoplus_{i=1}^{t'} \left(J_{m_{i1}}(\lambda _i)\oplus
J_{m_{i2}}(\lambda _i)\oplus\dots\oplus
J_{m_{ik_i}}(\lambda _i)\right)\right]
     \\
&\oplus\left[\bigoplus_{i=t'+1}^{t}
\left(C_{m_{i1}}(a_i,b_i)\oplus
C_{m_{i2}}(a_i,b_i)\oplus\dots\oplus
C_{m_{i{k_i}}}(a_i,b_i)\right)\right]
\end{split}
\end{equation}
with $\lambda _i,a_i,b_i\in\mathbb R$ and $b_i>0$; this direct sum is uniquely
determined up to
permutation of summands.
We suppose that
\begin{equation}\label{kff}
m_{i1}\ge m_{i2}\ge\dots\ge m_{ik_i}\qquad\text{for all }i.
\end{equation}

We also use the \emph{Frobenius canonical
form} for similarity (see \cite[Section
14]{pra}): each square matrix over an
arbitrary field $\F$ is similar to a
direct sum, determined uniquely up to
permutation of summands, of matrices of
the form
\begin{equation}\label{ktw}
\Phi_m (p):=\begin{bmatrix}
 0&1 &&0 \\&\ddots&\ddots\\0&&0&1\\
 -c_m&\dots&-c_2&-c_1
\end{bmatrix}\qquad
               (m\text{-by-}m)
\end{equation}
whose characteristic polynomial
$x^m+c_1x^{m-1}+\dots+c_m\in\F[x]$ is
an integer power of a polynomial $p(x)$ that
is irreducible over $\F$.

\section{The main result}

Let us fix a field $\F$ that is complete with
respect to a nontrivial absolute value, and a matrix $A\in\F^{n\times n}$.  In this section, we formulate Theorem \ref{teo2} about miniversal deformations of $A$ over $\F$.

Let ${\mathcal
D}$ be an $n\times n$ matrix consisting
of 0's and $*$'s such that
\begin{equation}\label{jyr1d}
{\mathbb F}^{n\times
n}=T(A) \oplus {\mathcal
D}({\mathbb F}),
\end{equation}
in which $T(A)$ is defined by
\eqref{eelie} with $\mathbb F$ instead of $\mathbb C$, and ${\mathcal D}({\mathbb
F})$ is the vector space of all
matrices obtained from $\mathcal D$ by
replacing its stars with elements of
$\F$. Such a matrix ${\mathcal D}$ always
exists; to construct it we can take the
set of matrix units lexicographically
arranged:
\[
E_{11},\ E_{12},\ \dots,\ E_{1n};\
E_{21},\ E_{22},\ \dots,\ E_{2n};\
\dots;\
E_{n1},\ E_{n2},\ \dots,\ E_{nn},\
\]
and delete those of them that are
linear combinations of the preceding
units and elements of $T(A)$. The
remaining units generate a vector space that
can be used as ${\mathcal D}({\mathbb F})$.

Let us fix $n^2$ matrices $F_{ij}\in\mathbb
F^{n\times n}$, $i,j=1,\dots,n$, such that
\begin{equation}\label{8}
E_{ij}+F_{ij}A
-AF_{ij}\in {\mathcal
D}({\mathbb F})
\end{equation}
for each $n\times n$ matrix unit
$E_{ij}$ ($F_{ij}$ exists by \eqref{jyr1d}).
We can and will take
\begin{equation}\label{mrk}
F_{ij}=0_n\qquad \text{if $E_{ij}\in {\mathcal
D}({\mathbb F})$}.
\end{equation}
Define the neighborhood of zero \begin{equation}\label{kuxd}
U:=\left\{X\in\F^{n\times n}\biggm|
\|X\|<\frac{1}{48\sqrt{n}(a+1)f^2} \right\},
\end{equation}
in which
\begin{equation}\label{kux}
a:=\|A\|,\qquad f:=\max\bigm\{
\sum_{i,j}\|F_{ij}\|,\textstyle\frac13\bigm\},
\end{equation}
and
\[
\|M\|:=\sqrt{\sum |m_{ij}|^2}\qquad \text{for all $M=[m_{ij}]\in\mathbb
      F^{n\times n}$.}
\]

For each $X\in U$, we construct a
sequence
\begin{equation}\label{rtg}
M_1:=X,\ M_2,\ M_3,\dots
\end{equation}
of $n\times n$ matrices as follows: if
\begin{equation}\label{mro}
M_k=[m_{ij}^{(k)}]
\end{equation}
 has been
constructed, then $M_{k+1}$ is defined
by
\begin{equation}\label{dei}
A+M_{k+1}:=(I_n-C_k)^{-1}(A+M_k)(I_n-C_k),
\quad
C_k:=\sum_{i,j}m_{ij}^{(k)}F_{ij}.
\end{equation}

Our main result is the following theorem.

\begin{theorem}
\label{teo2}

Let $\F$ be a complete field with
respect to a nontrivial absolute value, and let $A\in\F^{n\times n}$. Let ${\mathcal
D}$ be an $n\times n$ matrix consisting
of 0's and $*$'s that satisfies \eqref{jyr1d}. Then the following statements hold:

\begin{itemize}
  \item[\rm(a)] Let $C_1,C_2,\dots$ be formed by \eqref{dei}.  For each matrix $X$ from the set
      $U$ defined in \eqref{kuxd}, the infinite
      product
\begin{equation}\label{gre}
{\mathcal
S}(X):=(I_{n}-C_1)(I_{n}-C_2)(I_{n}-C_3)\cdots
\end{equation}
is convergent and nonsingular. The related matrix
function ${\mathcal S}:U\to \mathbb
F^{n\times n}$ is continuous and ${\mathcal
S}(0_n)=I_n$; this function is analytic if $\F=\mathbb C$.

\item[\rm(b)] If $X\in U$, then
\begin{equation}\label{msu1}
D(X):={\mathcal
S}(X)^{-1}(A+X){\mathcal S}(X)-A\in {\mathcal
D}(\mathbb F),
\end{equation}
for which ${\mathcal S}(X)$ is
    defined in \eqref{gre}. This means that all matrices $A+X$
    with $X\in U$ are reduced by
    the similarity transformation
    ${\mathcal S}(X)^{-1}(A+X) {\mathcal
    S}(X)$ to the
    form $A+{\mathcal D}$. The
stars in ${\mathcal D}$ represent the entries that
depend continuously on the entries
of $X$. The number of stars in
${\mathcal D}$ is equal to the
codimension of the similarity class
of $A$.

In particular, for each $\varepsilon$ satisfying $0<\varepsilon\le 1/2$ and for each $X\in 2\varepsilon U$, we have
\begin{align}\label{geo}
\|{\mathcal S}(X)-I_n\|&<
-1+(1+\varepsilon) (1+\varepsilon
^2) (1+\varepsilon ^3)\cdots,\\
\label{geok} \|D(X)\|&\le
\varepsilon/(2f),
\end{align}
in which $f$ is defined in \eqref{kux}.

\item[\rm(c)] Let one of the following conditions hold:
\begin{itemize}
  \item[\rm(i)] $A$ is a Frobenius canonical matrix
\begin{equation*}\label{klu}
\bigoplus_{i=1}^t
\left(\Phi_{m_{i1}}(p_i)\oplus
\Phi_{m_{i2}}(p_i)\oplus\dots\oplus
\Phi_{m_{ik_i}}(p_i)\right),\quad
m_{i1}\ge\dots\ge m_{ik_i}
\end{equation*}
$($see \eqref{ktw}$)$ in which $p_1,p_2,\dots,p_t$ are
distinct irreducible polynomials
over $\F$,

  \item[\rm(ii)] $\F=\mathbb C$ and
    $A$ is a Jordan
canonical matrix \eqref{kus} satisfying \eqref{kfu},

  \item[\rm(iii)] $\F=\mathbb R$ and $A$ is a real Jordan canonical matrix \eqref{ffg} satisfying \eqref{kff}.
\end{itemize}
Then the matrix $\mathcal D$ satisfying \eqref{jyr1d} can be taken as follows:
\begin{equation*}\label{ged1}
{\mathcal D}:=\bigoplus_{i=1}^t
\begin{bmatrix}
0_{m_{i1}}^{\downarrow}
&0^{\downarrow}&\ \ \dots\ \ &0^{\downarrow}
                \\
0^{\leftarrow}&
0_{m_{i2}}^{\downarrow}
&\ddots&\vdots
                \\
\vdots&\ddots&\ddots&0^{\downarrow}
                \\
0^{\leftarrow}&\dots&0^{\leftarrow}
&0_{m_{ik_i}}^{\downarrow}
\end{bmatrix},
\end{equation*}
in which $0_m^{\downarrow}$ denotes the matrix $0^{\downarrow}$ of size $m\times m$.
\end{itemize}
\end{theorem}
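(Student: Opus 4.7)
My plan is to prove (a) and (b) by quantitative analysis of the iteration (\ref{dei}), and (c) by verifying the decomposition (\ref{jyr1d}) separately for each canonical form.

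The starting point is an algebraic rewriting of the recursion. Setting $d_{ij}:=E_{ij}+F_{ij}A-AF_{ij}\in\mathcal{D}(\F)$ as in (\ref{8}), a direct expansion of (\ref{dei}) yields
\[
M_{k+1}=(I_n-C_k)^{-1}(D_k - M_k C_k),\qquad D_k:=\textstyle\sum_{i,j} m_{ij}^{(k)}d_{ij}\in\mathcal{D}(\F).
\]
Hence $M_{k+1}=D_k+R_k$, where $R_k$ has norm controlled by $\|C_k\|\,(\|M_k\|+\|D_k\|)$. Because $D_k$ is supported at the star-positions of $\mathcal{D}$ and by (\ref{mrk}) the matrices $F_{ij}$ at those positions vanish, $C_{k+1}$ is a linear combination of only those entries of $M_{k+1}$ that lie outside the support of $\mathcal{D}$, i.e., only of entries of $R_k$. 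This produces the quadratic self-improving inequality
\[
\|C_{k+1}\|\le f\,\|R_k\|\le \frac{2f^2(1+af)\|M_k\|\,\|C_k\|}{1-f\|M_k\|}.
\]
The core inductive claim, to be proved by induction on $k$ for each $X\in 2\varepsilon U$ with $0<\varepsilon\le 1/2$, is that $\|C_k\|\le\varepsilon^k$ and $\|M_k\|$ stays below a suitable multiple of $\varepsilon/f$; the explicit constants $48$, $\sqrt n$, $(a+1)$, $f^2$ in (\ref{kuxd}) are calibrated precisely so that one iteration preserves this invariant. Once it is established, the product $\mathcal{S}(X)=\prod(I_n-C_k)$ converges to a nonsingular limit satisfying (\ref{geo}); continuity in $X$ follows from uniform convergence on $U$, and analyticity for $\F=\mathbb{C}$ follows because each partial product is rational in the entries of $X$. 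Telescoping (\ref{dei}) gives $\mathcal{S}(X)^{-1}(A+X)\mathcal{S}(X)-A=\lim_k M_k$, and since $M_k-D_{k-1}\to 0$ while each $D_{k-1}$ lies in the closed subspace $\mathcal{D}(\F)$, the limit $D(X)$ lies in $\mathcal{D}(\F)$, yielding (\ref{msu1}) and (\ref{geok}).

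For part (c), case (ii) is exactly Theorem \ref{bhj}(b), and case (iii) is the Garcia-Planas--Sergeichuk complement recorded in the first remark after Theorem \ref{bhj}. The new content is case (i): for a Frobenius canonical $A$, I would first match dimensions via $\dim T(\Phi_m(p))=m^2-m$ (companion matrices being non-derogatory), so that the proposed star-pattern has the right cardinality, and then verify $T(A)\cap\mathcal{D}(\F)=\{0\}$ either by direct use of the companion recurrence or by extending scalars to $\overline{\F}$, splitting each $\Phi_m(p)$ as $\bigoplus_{\alpha}J_{m/\deg p}(\alpha)$, applying Arnold's complement block-by-block, and descending via Galois invariance of both $T(A)$ and of the set of star-positions.

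The main obstacle will be closing the inductive estimate in the second paragraph. Because the projection onto $\mathcal{D}(\F)$ along $T(A)$ is not orthogonal, the norm $\|D_k\|$ can exceed $\|M_k\|$, so $\|M_k\|$ is not automatically non-increasing and must be tracked separately from $\|C_k\|$ by splitting $M_k$ into its \emph{inside} and \emph{outside} parts. The specific numerology of (\ref{kuxd}) is forced by requiring these competing bounds to close into a single self-consistent induction; balancing the growth factor $1+2af$ in $\|D_k\|$ against the $f^2$ factor driving the decay of $\|C_k\|$ is the most delicate quantitative point.
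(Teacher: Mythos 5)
Your treatment of parts (a) and (b) is essentially the same as the paper's: both rest on the identity $(I_n-C_k)M_{k+1}=D_k-M_kC_k$ with $D_k:=M_k+C_kA-AC_k\in\mathcal D(\F)$, the key observation (via \eqref{mrk}) that $C_{k+1}$ depends only on the entries of $M_{k+1}$ outside the star positions, and a resulting geometric decay of $\|C_k\|$ that makes the infinite product converge. The paper tracks the three quantities $\|M_\ell\|$, $\|M_\ell\|_{\mathcal D}$, $\|C_\ell\|$ via auxiliary sequences $\tau_\ell$, $\delta_\ell$ (Lemma~\ref{lib}); you phrase the same induction with $\|M_k\|$ and $\|C_k\|$, which is just a notational repackaging. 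Continuity from uniform convergence, analyticity via Weierstrass, and the telescoping argument for \eqref{msu1} all agree with the paper.

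Part (c)(i) is where your proposal has a real gap. First, the dimension count $\dim T(\Phi_m(p))=m^2-m$ only covers the diagonal block; the decomposition \eqref{jyr1d} for a direct sum $A=A_1\oplus\cdots\oplus A_l$ requires analysing all cross-blocks $T(A_i,A_j)$ separately, which is exactly what the paper's Lemma~\ref{lsb} does via $\Hom_{\F[x]}(M(\Psi),M(\Phi))$ for cyclic modules. Second, the Galois-descent alternative is problematic: the star pattern $0^{\downarrow}$, $0^{\leftarrow}$ of the claimed complement lives in the \emph{Frobenius} basis, while the similarity taking $\Phi_m(p^r)$ to its Jordan form over $\overline\F$ is not a permutation, so Arnold's complement in the Jordan basis does not pull back to the stated star positions, and ``Galois invariance of the set of star positions'' does not apply across this basis change. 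There is also a separability issue (relevant for $\F=\F_q((T))$): for inseparable $p$ the splitting into Jordan blocks you invoke does not yield the direct sum you describe. Finally, Arnold's Theorem~\ref{bhj} is stated over $\mathbb C$; using it over a general $\overline\F$ requires a purely algebraic re-derivation of the complement that you do not supply. The paper avoids all of this with a self-contained, characteristic-free computation of $\dim\Ker\xi$ (where $\xi:X\mapsto X\Psi-\Phi X$) using that $M(\Psi)$ is cyclic, followed by an explicit exhibition of the images $\xi(E_{ij})$ to show the sum is all of $\F^{m\times n}$. This is the missing piece you would need to replace.

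A minor quantitative caveat in your sketch of (a)/(b): the bound $\|C_{k+1}\|\le 2f^2(1+af)\|M_k\|\|C_k\|/(1-f\|M_k\|)$ should be checked more carefully, since $\|D_k\|$ involves both $\|M_k\|$ (from the $E_{ij}$ terms) and $a\|C_k\|$ (from the $F_{ij}A-AF_{ij}$ terms), and the scalar $n$ enters through $\|I_n\|=\sqrt n$; but as you rightly note, this is a matter of calibrating constants, not of structure.
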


Note that Theorem \ref{teo2}(a) constructs a
matrix ${\mathcal S}(X)$ that transforms a family of all matrices $A+X$ with $X\in U$ to the form $A+{\mathcal D}$. Garcia-Planas and Mailybaev
\cite{gar_mai,mai2000,mai2001}
construct
analogous matrices (in the
form of Taylor series) that transform families of complex matrices under
similarity and families of complex matrix
pencils under strict equivalence to their miniversal
deformations. They also give numerous
applications.

\section{Proof of the main result} \label{sect4}

In the remaining portion of the article we prove parts (a)--(c) of Theorem
\ref{teo2}.

\subsection{Proof of part (a)} \label{sect4a}

By \eqref{8}, \eqref{mro}, and
\eqref{dei},
\begin{align*}\label{8a}
\sum_{i,j}m_{ij}^{(k)} E_{ij}+
\sum_{i,j}m_{ij}^{(k)}F_{ij}A -
\sum_{i,j}m_{ij}^{(k)}AF_{ij}&\in {\mathcal
D}({\mathbb F}),\\
M_k+C_kA-AC_k&\in{\mathcal D}({\mathbb F}).
\end{align*}

For each $P=[p_{ij}]\in\mathbb
      F^{n\times n}$, we write
\[
\|P\|_{\mathcal
D}:=\sqrt{\sum_{
{(i,j)\notin{\mathcal
I}}(\mathcal D)} |p_{ij}|^2},
\]
in which ${\mathcal I}({\mathcal D})\subseteq
\{1,\dots,n\}\times \{1,\dots,n\}$ is
the set of indices of the stars in
${\mathcal D}$.

Let us fix $\varepsilon\in\mathbb R$ such that
$0<\varepsilon \le 1/2$. Define a
sequence
\begin{equation*}\label{21z}
\delta_1,\
\tau_1,\
\delta_2,\
\tau_2,\
\delta_3,\
\tau_3,\ \dots
\end{equation*}
of positive real numbers by induction:
\begin{gather}\nonumber
\tau_1=\delta_1:=\varepsilon /(8
fv)=\varepsilon /(24
\sqrt{n}(a+1)f^2),\\\label{lob}
\tau_{k+1}:=\tau_k+\delta_kv ,\quad
\delta_{k+1}:=\delta
_k\varepsilon=\delta _1\varepsilon^{k}
\quad(k=1,2,\dots),
\end{gather}
in which $a$ and $f$ were defined in \eqref{kux} and
\[
v:=3\sqrt{n}(a+1)f.
\]

\begin{lemma}\label{lib}
If $\|M_1\|<\tau _1$ in \eqref{rtg},
then
\begin{equation}\label{hte}
\|M_{\ell}\|<\tau_{\ell}
,\quad
\|M_{\ell}\|_{\mathcal D} <\delta_{\ell},\quad
\|C_{\ell}\|\le \delta _{\ell}f
\quad(\ell=1,2,\dots).
\end{equation}
\end{lemma}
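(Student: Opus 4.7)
The plan is induction on $\ell$, proving the three bounds in \eqref{hte} simultaneously. For $\ell=1$: the first follows by hypothesis, and $\|M_1\|_{\mathcal D}\le \|M_1\|<\delta_1$ gives the second. For the third, I use \eqref{mrk}: since $F_{ij}=0_n$ whenever $E_{ij}\in\mathcal D(\F)$, the sum $C_1=\sum m_{ij}^{(1)}F_{ij}$ only ranges over indices $(i,j)\notin\mathcal I(\mathcal D)$. The bound $|m_{ij}^{(1)}|\le\|M_1\|_{\mathcal D}$ at each such index, combined with $\sum_{i,j}\|F_{ij}\|\le f$, then yields $\|C_1\|\le\|M_1\|_{\mathcal D}f\le\delta_1 f$.

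For the inductive step, I apply the algebraic identity \eqref{kkd} with $X:=C_k$ but with $A$ replaced by $A+M_k$, and subtract $A$ to obtain
\begin{equation*}
 M_{k+1}=\bigl(M_k+C_kA-AC_k\bigr)+\bigl(C_kM_k-M_kC_k\bigr)+C_k(I_n-C_k)^{-1}\bigl[C_k(A+M_k)-(A+M_k)C_k\bigr].
\end{equation*}
By the observation recorded just before the lemma, the first parenthesis lies in $\mathcal D(\F)$. Let $\pi$ be the linear map that zeros out the $\mathcal I(\mathcal D)$-positions, so that $\|P\|_{\mathcal D}=\|\pi P\|$ and $\pi$ annihilates $\mathcal D(\F)$. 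Then $\|M_{k+1}\|_{\mathcal D}=\|\pi M_{k+1}\|$ depends only on the last two terms above, while the bound on $\|M_{k+1}-M_k\|$ (needed for the first estimate of \eqref{hte}) uses all three.

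Each term is then controlled by submultiplicativity of $\|\cdot\|$, which is valid over any field with absolute value via Cauchy--Schwarz applied to $\sum_r|a_{ir}||b_{rj}|$. Feeding in the inductive estimates $\|C_k\|\le\delta_k f$, $\|M_k\|<\tau_k$, and $\|A\|=a$, the two ``nonlinear'' terms are dominated by quantities of order $\delta_k f(a+\tau_k)$ and $\delta_k^2 f^2(a+\tau_k)/(1-\delta_k f)$, while the extra commutator $C_kA-AC_k$ contributes at most $2\delta_k fa$ to $\|M_{k+1}-M_k\|$. The target inequalities are (i) the total increment $\le \delta_k v$, so $\|M_{k+1}\|<\tau_k+\delta_k v=\tau_{k+1}$, and (ii) $\|\pi M_{k+1}\|<\delta_{k+1}=\varepsilon\delta_k$. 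The bound on $\|C_{k+1}\|$ then follows from $\|C_{k+1}\|\le\|M_{k+1}\|_{\mathcal D}f$ exactly as in the base case.

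The main obstacle is the arithmetic balance. The commutator $C_kM_k-M_kC_k$ carries the factor $\|M_k\|\le\tau_k$, which is not itself of order $\delta_k$ but only of order $\varepsilon/f$, so the required geometric decay $\delta_{k+1}=\varepsilon\delta_k$ forces $2f\tau_k+O(\delta_k f)\le\varepsilon$ uniformly in $k$. This is precisely what the constants $\delta_1=\varepsilon/(24\sqrt n(a+1)f^2)$ and $v=3\sqrt n(a+1)f$ are engineered to deliver: the telescoping $\tau_k=\tau_1+\sum_{j<k}\delta_j v\le\tau_1+\delta_1 v/(1-\varepsilon)$ gives $\tau_k\le\varepsilon/(4f)$ under $\varepsilon\le 1/2$, while $\delta_k f\le\delta_1 f\ll 1$ keeps the Neumann factor $(I_n-C_k)^{-1}$ tame. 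Once these margins are in place, verifying (i) and (ii) reduces to a straightforward triangle-inequality chase.
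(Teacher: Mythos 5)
Your proof follows essentially the same route as the paper: simultaneous induction on the three bounds, the decomposition of $M_{k+1}$ via the identity \eqref{kkd} applied to $A+M_k$ and $X=C_k$, the projection onto the non-star positions to isolate $\|M_{k+1}\|_{\mathcal D}$, the Neumann-series control of $(I-C_k)^{-1}$, and the reduction of the key step to the uniform bound $2f\tau_k+fv\delta_k\le\varepsilon$ obtained by telescoping $\tau_k$ and summing the geometric series $\sum\delta_j$. Two small imprecisions do not affect the argument: the commutator $C_kM_k-M_kC_k$ is bounded by $2\delta_kf\tau_k$, not $\delta_kf(a+\tau_k)$ (you in fact use the correct $\tau_k$ factor in the subsequent ``arithmetic balance'' paragraph), and the telescoped bound is $\tau_k\le 3\varepsilon/(8f)$ rather than $\varepsilon/(4f)$, which still gives $2f\tau_k+fv\delta_k\le 7\varepsilon/8\le\varepsilon$ as in the paper. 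For the first inequality the paper uses the slightly cleaner two-term form $A_{k+1}=A_k+(I-C_k)^{-1}(C_kA_k-A_kC_k)$, but your three-term expansion yields the same estimate.
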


\begin{proof}
If the first inequality in \eqref{hte}
holds, then the third holds too since
\begin{align*}\label{mmfd}
 \|C_{\ell}\|&=\|\!\!\sum_{
{(i,j)\notin{\mathcal
I}}(\mathcal D)}m_{ij}^{({\ell})}F_{ij}\|
\quad (\text{by \eqref{mrk} and \eqref{dei})}
\\&\le
\sum_{
{(i,j)\notin{\mathcal
I}}(\mathcal D)}|m_{ij}^{({\ell})}|\cdot
\|F_{ij}\|
\le \sum_{
{(i,j)\notin{\mathcal
I}}(\mathcal D)}\delta _{\ell}\cdot
\|F_{ij}\|
\le \delta _{\ell}f<1/10.
\end{align*}

The series
\[
(I-C_{\ell})^{-1}=I+C_{\ell}+C_{\ell}^2+\cdots
\]
is convergent since the sequence of its
partial sums $I$, $I+C_{\ell}$,
$I+C_{\ell}+C_{\ell}^2,\ \dots$ is a
Cauchy sequence. Moreover,
\begin{align*}
\|(I-C_{\ell})^{-1}\|&=
\|I+C_{\ell}+C_{\ell}^2+\cdots\|\le
\|I\|+\|C_{\ell}\|+\|C_{\ell}^2\|+\cdots
\\&\le \sqrt{n}(1+\|C_{\ell}\|
+\|C_{\ell}\|^2+\cdots)
=\sqrt{n}/(1-\|C_{\ell}\|)<1.5\sqrt{n}.
\end{align*}

Suppose that \eqref{hte} holds for
all $\ell\le k$; let us
 prove it for $\ell=
k+1$. Write $A_k:=A+M_k$, then
\begin{equation*}\label{geo1}
\|A_k\|=\|A+M_k\|\le\|A\|+\|M_k\|
\le a+\delta _k<a+1.
\end{equation*}
By \eqref{dei},
\[
A_{k+1}=(I+C_k+C_k^2+\cdots)A_k(I-C_k)
=A_k+(I-C_k)^{-1}(C_kA_k-A_kC_k).
\]
Subtracting $A$ from these equations and taking the absolute
value, we get
\begin{align*}
 \|M_{k+1}\|\le \|M_{k}\|+1.5\sqrt{n}\cdot
2\|C_k\|\cdot\|A_k\|
 \le
\tau _k+3\sqrt{n}\cdot\delta _kf\cdot
(a+1)=\tau _{k+1},
\end{align*}
which proves the first inequality in
\eqref{hte} for $\ell= k+1$.

Let us prove the second inequality  in \eqref{hte}. By
\eqref{kkd},
\[
A+M_{k+1}=A+M_k+C_k(A+M_k)-(A+M_k)C_k
+C_k(I-C_k)^{-1}(C_kA_k -A_kC_k),
\]
\[
M_{k+1}=\underbrace{M_k+C_kA-A_kC_k}
_{\text{belongs to ${\mathcal
D}({\mathbb F})$}}
 +C_kM_k-M_kC_k
+C_k(I-C_k)^{-1}(C_kA_k -A_kC_k),
\]
\begin{align*}
\|M_{k+1}\|_{\mathcal
D}&\le 2\|C_k\|\cdot\|M_k\|
+\|C_k\|\cdot 1.5\sqrt{n}\cdot 2\|C_k\|
\cdot\|A_k\|
\\&\le 2\cdot \delta _kf\cdot(\tau _k+
1.5\sqrt{n}\cdot\delta_k f\cdot(a+\delta _k))
\le \delta_k f(2\tau _k+v\delta _k).
\end{align*}
This reduces the request inequality $\|M_{k+1}\|_{\mathcal
D}
<\delta_{k+1}(=\delta_{k}\varepsilon)$
to the inequality
$2f\tau _k+fv\delta _k\le \varepsilon
$.

Since
\begin{equation}\label{kku}
\begin{split}
\tau _k&= \tau _{k-1}+v\delta_{k-1}
= \tau _{k-2}+v(\delta_{k-2}+\delta_{k-1})
\\&\le \tau _1+v(\delta_1+\delta_2+\delta_3+
\cdots)=\delta_1+v(\delta_1
+\delta_1\varepsilon +\delta_1\varepsilon^2+
\cdots)
\\&=\delta_1(1
+v/(1-\varepsilon))\le
\delta_1(1+2v)\le 3\delta_1v,
\end{split}
\end{equation}
we obtain that
\[
2f\tau _k+fv\delta _k\le 6f\delta_1v+fv\delta _k
\le 7f\delta_1v\le 7\varepsilon /8\le \varepsilon.
\qedhere
\]
\end{proof}

\begin{proof}[Proof of Theorem
\ref{teo2}(a)] Write
\begin{equation}\label{llv}
{\mathcal S}_{k,l}(X):=\prod_{i=k}^{l}
(I-C_i),\quad {\mathcal S}_{l}(X):=
{\mathcal S}_{1,l}(X)\qquad\text{for all
$1\le k\le l\le \infty$}.
\end{equation}

If $\|X\|\le \tau _1= \varepsilon /(8
fv)$ and $l<\infty$, then by Lemma
\ref{lib}
\begin{align}\nonumber
\|{\mathcal
S}_{k,l}(X)-I\|&=
 \|(I-C_k)(I-C_{k+1})
\cdots(I-C_l)-I\|
  \\\nonumber&\le\sum_{k\le i\le l}\|C_i\|
+\sum_{k\le i<j\le l}\|C_i\|\,\|C_j\|+\cdots
+\|C_k\|\cdots \|C_l\|
  \\\nonumber&\le -1+(1+\|C_k\|)(1+\|C_{k+1}\|)\cdots
(1+\|C_{l}\|)
  \\\nonumber&\le -1+(1+ \varepsilon ^{k-1}\delta_1f)
(1+\varepsilon^{k}\delta_1f)
(1+\varepsilon^{k+1}\delta_1f)\cdots
  \\\nonumber& \le {\textstyle -1+\left(1+  \frac{\varepsilon^{k}}{8v}\right)
\left(1+\frac{\varepsilon^{k+1}}{8v}\right)
\left(1+\frac{\varepsilon^{k+2}}{8v}\right)\cdots}
  \\\label{lfh}&\le -1+(1+\varepsilon^{k})
(1+\varepsilon^{k+1})
(1+\varepsilon^{k+2})\cdots
\end{align}
The infinite product
$\omega_k(\varepsilon )
:=\prod_{i=k}^{\infty}(I+\varepsilon
^i)$ is convergent since any product
$\prod_{i=1}^{\infty}(1+a_i)$ with
positive real $a_i$'s converges if and
only if the infinite series
$\sum_{i=1}^{\infty}a_i$ converges (see
\cite[Theorem 15.14]{mark}). Write
$\omega_k:=\omega_k(1/2)$.

The
infinite product ${\mathcal
S}(X)=\prod_{i=1}^{\infty}(I-C_i)$
defined in \eqref{gre} is convergent for each $X\in U$.
Indeed, the sequence ${\mathcal
S}_{1}(X)$, ${\mathcal S}_{2}(X)$, ${\mathcal
S}_{3}(X),\dots$ is a Cauchy sequence
since $1\le k\le l$ implies
\begin{align*}
\|{\mathcal S}_{l}(X)&-{\mathcal S}_{k}(X)\|
=\|{\mathcal S}_{k}(X)({\mathcal S}_{k+1,l}(X)-I)\|
   \\&\le \|({\mathcal S}_{k}(X)-I)+I\|\cdot
\|{\mathcal S}_{k+1,l}(X)-I)\|
   \\&\le ((\omega _1-1)+\sqrt{n})
   (\omega _{k+1}-1) \quad\text{by \eqref{lfh}}.
\end{align*}

Let us prove that the matrix function
${\mathcal S}:U\to \mathbb F^{n\times n}$
is continuous. By \eqref{dei}, the
entries of each $C_i$ are polynomials
in the entries of $X$. Thus, the
entries of ${\mathcal S}_k(X)$ are
polynomials in the entries of $X$ for
each $k$. Since the operations $x+y$,
$x-y$, $xy$, $x^{-1}$ are continuous,
the matrices ${\mathcal S}_k(X)$ are
continuous functions. Let
$\varepsilon'$ be a small positive
number. For any $X,Y\in U$ and $k\in\mathbb
N$, we have
\begin{align*}
{\mathcal S}(Y)&-{\mathcal S}(X)=
 {\mathcal S}_{k}(Y){\mathcal S}_{k+1,\infty}(Y)
 -{\mathcal S}_{k}(X){\mathcal S}_{k+1,\infty}(X)
\\&=({\mathcal S}_{k}(Y)
-{\mathcal S}_{k}(X)){\mathcal S}_{k+1,\infty}(Y)
+{\mathcal S}_{k}(X)({\mathcal S}_{k+1,\infty}(Y)
 -{\mathcal S}_{k+1,\infty}(X)),
\end{align*}
hence
\begin{align*}
&\|{\mathcal S}(Y)-{\mathcal S}(X)\|
    \\&\le\|{\mathcal S}_{k}(Y)
-{\mathcal S}_{k}(X)\|
\cdot\|{\mathcal S}_{k+1,\infty}(Y)\|
+\|{\mathcal S}_{k}(X)\|\cdot\|{\mathcal S}_{k+1,\infty}(Y)
 -{\mathcal S}_{k+1,\infty}(X)\|
    \\&\le\|{\mathcal S}_{k}(Y)
-{\mathcal S}_{k}(X)\|
(\omega _1+\sqrt{n})
+\omega _1\|{\mathcal S}_{k+1,\infty}(Y)
 -{\mathcal S}_{k+1,\infty}(X)\|.
\end{align*}
Because ${\mathcal S}_{k}(X)$ converges
uniformly, there exists $\delta '>0$
such that
\[\|{\mathcal S}_{k}(Y)
-{\mathcal S}_{k}(X)\|\cdot
(\omega _1+\sqrt{n})<\varepsilon'/2\qquad
\text{if $\|Y-X\|< \delta '$}.\] Since
$\omega _t\to 1$, there exists $k$ such
that
\begin{align*}
\omega _1 \|{\mathcal S}_{k+1,\infty}(Y)
 &-{\mathcal S}_{k+1,\infty}(X)\|
   \le \omega _1\big(\|{\mathcal S}_{k+1,\infty}(Y)-I\|
 +\|{\mathcal S}_{k+1,\infty}(X)-I\|\big)
\\&\le \omega _1\cdot 2(\omega _{k+1}-1)
<\varepsilon'/2.
\end{align*}
Thus, $\|{\mathcal S}(Y)-{\mathcal S}(X)\|<
\varepsilon'$ if $\|Y-X\|< \delta '$
and so ${\mathcal S}:U\to \mathbb
F^{n\times n}$ is a continuous
function.

The matrix ${\mathcal S}(X)$ is
nonsingular for each $X\in U$. Indeed,
${\mathcal S}(X)
 ={\mathcal S}_{k}(X){\mathcal S}_{k+1,\infty}(X)$, in which
${\mathcal S}_{k}(X)$ is nonsingular for
each $k$, and ${\mathcal
S}_{k+1,\infty}(X)$ is certainly
nonsingular for a sufficiently large
$k$ since $\|{\mathcal
S}_{k+1,\infty}(X)-I\|\to 0$ by
\eqref{lfh}.

If $X=0_n$, then all $M_i=C_i=0_n$, and
so ${\mathcal S}(0_n)=I_n$.

Let now $\F=\mathbb C$; we should prove that ${\mathcal S}:U\to \mathbb
F^{n\times n}$ is analytic.
By \eqref{dei},
the entries of each $C_i$ are
polynomials in the entries of $X$.
Hence the entries of each $ {\mathcal
S}_{\ell }(X)$ (see \eqref{llv}) are
polynomials in the entries of $X$.
Since ${\mathcal S}_{\ell }(X)\to {\mathcal
S}(X)$, the Weierstrass theorem on
uniformly convergent sequences of
analytic functions (see \cite[Theorem
15.8]{mark}) ensures that the entries of
${\mathcal S}(X)$ are analytic functions
in the entries of $X$.
\end{proof}

\subsection{Proof of part (b)} \label{seke}

By \eqref{dei} and \eqref{llv},
\[A+M_k(X)={\mathcal S}_{k-1}(X)^{-1}
(A+X){\mathcal S}_{k-1}(X)\quad\text{for each }X\in U.
\]
Hence, $M_k(X)\to D(X)$  as $k\to
\infty$, where $D(X)$ is defined in
\eqref{msu1}. By \eqref{hte} and
\eqref{lob}, $\|M_{k}(X)\|_{\mathcal D}
<\delta_{k}= \delta
_1\varepsilon^{k-1}\to 0$ as $k\to
\infty$, and so $\|D(X)\|_{\mathcal D}=0$,
which proves \eqref{msu1}. The
inequality \eqref{geo} follows from
\eqref{lfh} and the inequality
\eqref{geok} follows from
\begin{align*}
\|M_k\|&\le
\tau_k\le 3\delta_1v \quad(\text{by }\eqref{kku})\\
&\le 3\varepsilon/(8f)
\le  \varepsilon/(2f).
\end{align*}
Since \eqref{jyr1d} is a direct sum,
the number of stars in ${\mathcal D}$ is
equal to $n^2-\dim T(A)$, which is the
codimension of the similarity class of
$A$.

\subsection{Proof of part (c)} \label{sekh}

Let us prove the statement (i) in (c).
Write
\begin{equation*}\label{eeg}
T(P,Q):=\{XQ-PX\,|\,X\in{\mathbb
F}^{m\times n}\},\qquad P\in\F^{m\times m},\
Q\in\F^{n\times n}.
\end{equation*}
Let $A=A_1\oplus\dots\oplus
A_l\in\F^{n\times n}$, in which every
$A_i$ is of size $n_i\times n_i$. Let ${\mathcal
D}=[{\mathcal D}_{ij}]_{i,j=1}^l$ be a block
matrix, in
which every $\mathcal D_{ij}$ is a block of size $n_i\times
n_j$ whose entries are 0's and $*$'s. Clearly, ${\F}^{n\times n}=T(A)\oplus
{\mathcal D}({\F})$ if and only if
\begin{equation*}\label{lka}
{\F}^{n_i\times
n_j}=T(A_i,A_j)\oplus {\mathcal D}_{ij}({\F})\qquad\text{for all }
i,j=1,\dots,l.
\end{equation*}
Thus, the statement (i) is implied
by the following lemma.

\begin{lemma}\label{lsb}
Let $\Phi=\Phi(p^r)$ and
$\Psi=\Phi(q^s)$ be $m\times m$ and
$n\times n$ Frobenius blocks $($see
\eqref{ktw}$)$ in which $p,q\in\F[x]$
are irreducible polynomials with
leading coefficient $1$. The following hold:
\begin{itemize}
  \item[$(\alpha)$] If $p\ne q$, then
      ${\F}^{m\times
n}=T(\Phi,\Psi)$.

  \item[$(\beta)$] If $p=q$ and $r\ge
      s$, then ${\F}^{m\times
n}=T(\Phi,\Psi)\oplus
0^{\downarrow}(\F)$.

  \item[$(\gamma)$] If $p=q$ and
      $r\le s$, then ${\F}^{m\times
n}=T(\Phi,\Psi)\oplus
0^{\leftarrow}(\F)$.
\end{itemize}
\end{lemma}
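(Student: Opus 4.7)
The plan is to realize $T(\Phi,\Psi)$ as the image of the $\F$-linear map $\phi:\F^{m\times n}\to\F^{m\times n}$ defined by $\phi(X):=X\Psi-\Phi X$, and to compute its kernel via module theory. Viewing $\F^m$ and $\F^n$ as $\F[x]$-modules $V_\Phi, V_\Psi$ with $x$ acting by $\Phi$ and $\Psi$ respectively, and recalling that a Frobenius block has minimal polynomial equal to its characteristic polynomial, we get $V_\Phi\cong\F[x]/(p^r)$ and $V_\Psi\cong\F[x]/(q^s)$; under this identification $\ker\phi\cong\Hom_{\F[x]}(V_\Psi,V_\Phi)$, since $X\Psi=\Phi X$ is precisely the $\F[x]$-linearity condition.

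A direct computation of this $\Hom$-module yields the codimension of $T(\Phi,\Psi)$ in each case. If $p\ne q$, then $\gcd(p^r,q^s)=1$ forces $\Hom_{\F[x]}(\F[x]/(q^s),\F[x]/(p^r))=0$; hence $\phi$ is an isomorphism and $\F^{m\times n}=T(\Phi,\Psi)$, settling $(\alpha)$. If $p=q$ and $r\ge s$, a homomorphism is determined by the image $a\in\F[x]/(p^r)$ of $1$, subject to $p^sa\equiv 0\pmod{p^r}$, so $a\in (p^{r-s})/(p^r)\cong\F[x]/(p^s)$, of $\F$-dimension $s\deg p=n$; this matches $\dim 0^{\downarrow}(\F)=n$. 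If $p=q$ and $r\le s$, the constraint is vacuous, so $\Hom\cong\F[x]/(p^r)$ of $\F$-dimension $r\deg p=m$, matching $\dim 0^{\leftarrow}(\F)=m$. In cases $(\beta)$ and $(\gamma)$ the dimensions add to $mn$, so it suffices to show that the sum $T(\Phi,\Psi)+0^{\downarrow}(\F)$ or $T(\Phi,\Psi)+0^{\leftarrow}(\F)$ exhausts $\F^{m\times n}$.

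For $(\beta)$, I would exploit the row structure of $\Phi$: rows $1,\dots,m-1$ of $\Phi$ are $e_2^T,\dots,e_m^T$, so the $i$-th row of $\Phi X$ equals the $(i+1)$-th row of $X$ for $i<m$. Given $M$, requiring the $i$-th row of $X\Psi-\Phi X-M$ to vanish for $i=1,\dots,m-1$ becomes the one-step recursion $X_{i+1}=X_i\Psi-M_i$ on the rows of $X$; choosing $X_1$ freely (e.g.\ $X_1=0$) and iterating produces an $X$ with $X\Psi-\Phi X-M\in 0^{\downarrow}(\F)$. For $(\gamma)$ a dual column-based argument works: for $j\ge 2$ the $j$-th column of $\Psi$ is $e_{j-1}-d_{n-j+1}e_n$ (where the $d_i$ are the coefficients of $q^s$), so the vanishing of columns $2,\dots,n$ of $X\Psi-\Phi X-M$ becomes the recursion $X^{(j-1)}=\Phi X^{(j)}+d_{n-j+1}X^{(n)}+M^{(j)}$, solved by fixing $X^{(n)}$ arbitrarily and running $j$ downward from $n$ to $2$.

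The main subtlety is in $(\gamma)$: the free parameter $X^{(n)}$ appears as a source term throughout the recursion rather than only as a boundary value, reflecting the asymmetry between rows and columns in the companion form adopted in the paper. This complicates the bookkeeping but does not obstruct existence, so the plan should go through cleanly.
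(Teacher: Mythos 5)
Your proposal is correct and follows essentially the same strategy as the paper: compute $\dim\Ker$ of the map $X\mapsto X\Psi-\Phi X$ by identifying it with $\Hom_{\F[x]}\!\left(\F[x]/(q^s),\,\F[x]/(p^r)\right)$, then establish the direct-sum decomposition by a dimension count plus a surjectivity check exploiting the triangular structure of the companion matrix. The paper phrases the surjectivity step as a spanning argument for the images $\xi(-E_{ij})$, whereas you phrase it as an explicit row (resp.\ column) recursion solving $X\Psi-\Phi X\equiv M$; these are the same triangular computation read from opposite ends.
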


\begin{proof}
Let us compute the dimension of
$T(\Phi,\Psi).$ The space
$T(\Phi,\Psi)$ is the image of the
linear operator
\begin{equation}\label{liu}
\xi: \F^{m\times n}\to
\F^{m\times n},\quad X\mapsto X\Psi-\Phi
X;
\end{equation}
hence
\begin{equation}\label{lku}
\dim T(\Phi,\Psi)=mn-\Ker(\xi),\quad
\Ker(\xi)= \{H\in \F^{m\times
n}\,|\,H\Psi=\Phi H\}.
\end{equation}

For each matrix $A\in\F^{t\times t}$,
denote by $M(A)=(\F^{t},A)$  the module over
the polynomial ring $\F[x]$ that is the vector space $\F^{t}$
with multiplication $f(x)v:=f(A)v$ for
all $f\in\F[x]$ and $v\in\F^{t}$. Each
matrix $H\in \Ker(\xi)$ defines the
$\F[x]$-homomorphism \[\varphi_H:M(\Psi)\to
M(\Phi),\quad v\mapsto Hv;\]
moreover, $H\mapsto \varphi_H$ is a
linear bijection from $\Ker(\xi)$ to
the $\mathbb F$-space of homomorphisms
$\Hom_{\F[x]}(M(\Psi), M(\Phi))$. Thus,
\[
\dim_{\F} \Ker(\xi)=
\dim_{\F} \Hom_{\F[x]}(M(\Psi), M(\Phi)).
\]
The module $M(\Psi)$ is cyclic because
\[M(\Psi)=\F[x]e_n\simeq
\F[x]/q(x)^n\F[x],\qquad\text{where }
e_n:=(0,\dots,0,1)^T\in \F^{n}.\] Hence, each homomorphism
$\varphi:M(\Psi)\to M(\Phi)$ is fully
determined by $\varphi(e_n)$.

\begin{itemize}
  \item If $p\ne q$, then $\varphi(e_n)=0$
since
\begin{equation}\label{ldr}
q(x)^s\varphi(e_n)
=\varphi(q(x)^se_n)=\varphi(0)=0,
\end{equation}
$p(x)^r\varphi(e_n)\in
p(x)^rM(\Phi)=0$, and $fp^r+gq^r=1$ for
some $f,g\in\F[x]$.

  \item If $p=q$  and $r\ge s$, then
$\varphi(e_n)\in p(x)^{r-s}M(\Phi)$ by
\eqref{ldr}. Moreover, $\varphi(e_n)$
is an arbitrary element of the submodule
$p(x)^{r-s}M(\Phi)\simeq M(\Psi)$ of dimension $n$.

  \item If $p=q$  and $r\le s$, then
$\varphi(e_n)$ is an arbitrary element of
$M(\Phi)$ of dimension $m$.

\end{itemize}
Therefore,
\[
\dim_{\F} \Ker(\xi)=
\left\{
  \begin{array}{ll}
    0 & \hbox{if $p\ne q$} \\
    n & \hbox{if $p=q$  and $r\ge s$} \\
    m & \hbox{if $p=q$  and $r\le s$}
  \end{array}
\right.
\]
for the linear operator \eqref{liu}. By \eqref{lku},
\begin{equation}\label{kve}
\dim_{\F} T(\Phi,\Psi)=
\left\{
  \begin{array}{ll}
    mn & \hbox{if $p\ne q$} \\
    (m-1)n & \hbox{if $p=q$  and $r\ge s$} \\
    m(n-1) & \hbox{if $p=q$  and $r\le s$}.
  \end{array}
\right.
\end{equation}

$(\alpha)$ If $p\ne q$, then ${\F}^{m\times
n}=T(\Phi,\Psi)$ by \eqref{kve}.

$(\beta)$ Let $p=q$ and $r\ge s$. Denote by
$E_{ij}$ the $(i,j)$ matrix unit
in $\F^{m\times n}$.
For each $i=1,\dots,m-1$, the matrices $\xi(-E_{i1})$,
$\xi(-E_{i2})$, \dots, $\xi(-E_{in})$
have the form
\begin{equation*}\label{mmkw}
\begin{bmatrix}
 0&0&\cdots&0\\[-7pt]\hdotsfor{4}\\
 0&0&\cdots&0\\ 1&0&\cdots&0\\
*&*&\cdots&*\\[-6pt]\hdotsfor{4}\\ *&*&\cdots&*\\
\end{bmatrix},\
\begin{bmatrix}
 0&0&\cdots&0\\[-7pt]\hdotsfor{4}\\
 0&0&\cdots&0\\ 0&1&\cdots&0\\
*&*&\cdots&*\\[-6pt]\hdotsfor{4}\\ *&*&\cdots&*\\
\end{bmatrix},\,\ \dots,\
\begin{bmatrix}
 0&0&\cdots&0\\[-7pt]\hdotsfor{4}\\
 0&0&\cdots&0\\ 0&0&\dots&1\\
*&*&\cdots&*\\[-6pt]\hdotsfor{4}\\ *&*&\cdots&*\\
\end{bmatrix},
\end{equation*}
respectively,
with units in the $i$th row.
Since $T(\Phi,\Psi)=\im \xi$, we have ${\F}^{m\times
n}=T(\Phi,\Psi)+0^{\downarrow}(\F)$.
This sum is direct by \eqref{kve} and
because $\dim 0^{\downarrow}(\F)=n$.

$(\gamma)$ Let $p=q$ and $r\le s$.
For each $j=2,\dots,n$, the matrices $\xi(E_{1j})$,
$\xi(E_{2j})$, \dots, $\xi(E_{mj})$
have the form
\begin{equation*}\label{mdkw}
\begin{bmatrix}
 *&\cdots&*&1&0&\cdots&0
 \\
*&\cdots&*&0&0&\cdots&0\\[-6pt]\hdotsfor{7}\\ *&\cdots&*&0&0&\cdots&0
\end{bmatrix},\,\ \dots,\
\begin{bmatrix}
 *&\cdots&*&0&0&\cdots&0
 \\
*&\cdots&*&0&0&\cdots&0\\[-6pt]\hdotsfor{7}\\ *&\cdots&*&1&0&\cdots&0
\end{bmatrix}\end{equation*}
with units in the $j$th column.
Since $T(\Phi,\Psi)=\im \xi$, we have
${\F}^{m\times n}=T(\Phi,\Psi)+
0^{\leftarrow}(\F)$. This sum is direct
by \eqref{kve} and because $\dim
0^{\leftarrow}(\F)=m$.

We have proved the statement (i) of part (c). The statements (ii) and (iii) were proved by Arnold \cite[Theorem 4.4]{arn} and by Garcia-Planas and Sergeichuk \cite[Theorem 3.1]{gar-ser}, respectively.
\end{proof}

\section*{Acknowledgements}
The work on the final version of this article was supported in part by the UAEU UPAR grant G00001922. The authors would like to thank the referees for their constructive comments, which helped us to improve the manuscript.

\bibliographystyle{amsalpha}

\end{document}